\documentclass[b5paper,reqno]{amsart}
\usepackage{amsmath,amssymb,amsfonts,amsthm,latexsym,amstext,amscd}
\usepackage{graphicx}
\usepackage{enumerate}
\usepackage{hyperref}
\usepackage[left=1.5cm,top=3cm,right=1.5cm,bottom=3cm]{geometry}
\makeatletter \thm@headfont{\bfseries\scshape} \makeatother
\overfullrule=4pt%
\frenchspacing%
\allowdisplaybreaks
\newtheorem{thm}{Theorem}[section]
\newtheorem{cor}[thm]{Corollary}
\newtheorem{lem}[thm]{Lemma}
\newtheorem{prop}[thm]{Proposition}
\theoremstyle{definition}

\theoremstyle{remark}
\newtheorem{rem}[thm]{Remark}
\numberwithin{equation}{section}

\begin{document}

\title[Finiteness of Tate-Shafarevich Group]{Finiteness of the Tate-Shafarevich Groups for Elliptic Curves over the Field of Rational Numbers}%
\author{Lan Nguyen }%
\address{Department of Mathematics}%
\email{nguyenl@uwp.edu}%

\thanks{}%
\subjclass[2010]{11C08, 11D09, 11D25, 11G05, 11G07, 11Y16}%
\keywords{Tate-Shafarevich Group; Homogeneous Space; Elliptic Curve; Hasse Principle; Hasse Minkowski Theorem; Birch and Swinnerton-Dyer Conjecture; Parity Conjecture}%

\begin{abstract}
Let $E$ be an elliptic curve over $\mathbb{Q}$. Let $\underline{III}(E)$ be a certain group of equivalence classes of homogeneous spaces of $E$ called its Tate-Shafarevich group. We show in this paper that this group has finite cardinality and discuss its role in the Birch and Swinnerton-Dyer Conjecture. In particular, our result implies the Parity Conjecture, or the Birch and Swinnerton-Dyer Conjecture modulo 2. It also removes the finiteness condition of $\underline{III}(E)$ from previous results in the literature of this subject and makes possible for some computation problems concerning the Strong Birch and Swinnerton-Dyer Conjecture. In addition, we also give an analogue of the Hasse-Minkowski Theorem for cubic plane curves.
\end{abstract}
\maketitle
\section{Introduction}
\subsection{Elliptic Curves and Tate-Shafarevich Groups} Let $E$ be an elliptic curve over $\mathbb{Q}$. Let $G_{\mathbb{Q}}$ denote the absolute Galois group Gal$(\overline{\mathbb{Q}}/\mathbb{Q})$. For each rational prime $p$, let $G_{\mathbb{Q}_{p}}$ denote the absolute Galois group Gal$(\overline{\mathbb{Q}_{p}}/\mathbb{Q}_{p})$. Let $WC(E_{/K}): = H^{1}(G_{K}, E)$ be the associated Weil-Ch\^{a}telet group where $K$ denotes either $\mathbb{Q}$ or $\mathbb{Q}_{p}$ for some rational prime $p$, finite or infinite. The Tate-Shafarevich group $\underline{III}(E)$ of $E$ is defined by Lang-Tate ([12]) and Shafarevich ([21]) in this case as the collection of elements of $WC(E_{/\mathbb{Q}})$ that becomes trivial in all completions of $\mathbb{Q}$. In other words, $\underline{III}(E)$ consists of all homogeneous spaces of $E$, up to equivalence, which have points everywhere locally. In the language of Galois Cohomology, $\underline{III}(E)$ can be written as \begin{equation}\underline{III}(E): = \bigcap_{p}ker(H^{1}(G_{\mathbb{Q}}, E)\rightarrow H^{1}(G_{\mathbb{Q}_{p}}, E)).\end{equation}
The group $\underline{III}(E)$ has been conjectured by Tate and Shafarevich ([12, 21, 23]), called the Tate Shafarevich Conjecture in the literature, to be finite. Moreover, Cassels ([3]) and Tate show that if the cardinality of $\underline{III}(E)$ is finite, then it is a perfect square due to the existence of a certain alternate bilinear pairing on it (see [16] for polarized abelian variety case). The group $\underline{III}(E)$ plays a vital role in the Birch and Swinnerton-Dyer Conjecture (BSDC), particularly the strong Birch and Swinnerton Conjecture ([1]). In fact, Artin and Tate ([22]) prove that, in the function field case, the finiteness of $\underline{III}(E)$ is equivalent to the entire BSDC. To state this conjecture, we need to define some terminology.

Let $E$ be an elliptic curve over $\mathbb{Q}$. Let $E(\mathbb{Q})$ denote the set of points of $E$ defined over $\mathbb{Q}$. By Mordell's Theorem ([15]), $E(\mathbb{Q})$ is finitely generated and
$$E(\mathbb{Q}) \cong \mathbb{Z}^{r} \times E(\mathbb{Q})_{tor}$$ where $E(\mathbb{Q})_{tor}$ denotes the collection of rational points of finite order, a finite group, and $r$, a nonnegative integer, is called the Weil-Mordell rank of $E$ and denoted by $r_{WM}(E)$. The group $E(\mathbb{Q})_{tor}$ is completely understood due to a result of Mazur ([14]). The $L$-function of $E$ in a complex variable $s$ is defined as the Euler product $$L_{E}(s): = \prod_{p \nmid N_{E}}(1-a_{p}p^{-s}+p^{1-2s})^{-1}\prod_{p|N_{E}}(1-a_{p}p^{-s})^{-1}$$ where $N_{E}$ denotes the conductor of $E$. Define $$r_{anal}(E): = ord_{s=1}L_{E}(s),$$ a nonnegative integer.

\bigskip

\textbf{Strong Birch and Swinnerton-Dyer Conjecture}: Let $E$ be an elliptic curve over $\mathbb{Q}$. Then:
\begin{enumerate}
\item $r_{anal}(E) = r_{MW}(E)$ where $r_{anal}$ and $r_{WM}$ are the analytic and the Weil-Mordell ranks of $E$ respectively.
\item $\underline{III}(E)$ is finite. 
\item \begin{equation}c_{E} = \lim_{s \rightarrow 1}\frac{L_{E}(s)}{(s-1)^{r_{MW}}} = \frac{|\underline{III}(E)|\Omega(E)Reg(E)(\prod_{p}c_{p})}{|E(\mathbb{Q})_{tor}^{2}|}\end{equation} where:
    \begin{itemize}
    \item $c_{E}$ is the coefficients of the term $(s-1)^{r_{anal}}$ in the Taylor expansion of $L_{E}(s)$ at $s = 1$.
    \item $\Omega(E)$ is the real period of $E$ multiplied by the number components of $E(\mathbb{R})$.
    \item $c_{p} = [E(\mathbb{Q}_{p}): E^{0}(\mathbb{Q}_{p})]$ for each prime $p$ is the corresponding Tamagawa number.
    \item $Reg(E)$ is the regulator of $E$, which is the determinant of the height pairing.
    \item $E(\mathbb{Q})_{tor}$ is the group of rational torsion points of $E$.
    \end{itemize}
\end{enumerate}

One of the first partial result, toward proving the BSDC, was obtained by J. Coates and A. Wiles ([5]). Now the full conjecture is known for all elliptic curves over $\mathbb{Q}$ with rank at most one.

\bigskip

The question of the finiteness of $\underline{III}(E)$ presents a major obstacle in answering many fundamental questions, both theoretically and computationally, toward understanding the Birch and Swinnerton-Dyer Conjecture. Theoretically, one of such important questions is the Parity Conjecture ([8]). Computationally,  important examples of such questions are the verification of $c_{E}$ in the strong Birch and Swinnerton-Dyer Conjecture and the determination of existence of rational points of an arbitrary genus one curve over $\mathbb{Q}$. Similar situations can be seen throughout the literature on this subject. Even in situations where it is possible to deduce interesting arguments to circumvent this obstacle, it often complicates and lengthens the arguments (see [4] for a discussion of this).

\subsection{Hasse Principle}

One of the fundamental questions in Diophantine Number Theory is whether a rational Diophantine equation or a system of such equations has a solution in $\mathbb{Q}$. Answering such a question is difficult and not always possible. Such questions are addressed by Hilbert's 10th problem ([13]).

If a polynomial with rational coefficients has a nontrivial rational solution, then it has nontrivial solutions in all $p$-adic fields $\mathbb{Q}_{p}$ and in $\mathbb{R}$. The Hasse principle asks when the reverse direction is true. That is, if a polynomial with rational coefficients has a nontrivial solution in each $p$-adic field $\mathbb{Q}_{p}$ and in $\mathbb{R}$, does it have a nontrivial rational solution? If it does, then one says that it satisfies the Hasse principle. More generally, the Hasse principle for a variety is a statement about the existence of global points given the existence of local points. For an elliptic curve $E$ over $\mathbb{Q}$, Manin shows that the obstruction for the Hasse principle is completely accounted for by $\underline{III}(E)$. That is, if $\underline{III}(E)$ is nontrivial, then there is at least one homogenous space of $E$ which has points over all local fields but does not have a global point. For a quadratic form, Minkowski established around 1920 (generalized to arbitrary number fields by Hasse later) the following beautiful theorem ([20]):

\begin{thm}{(Hasse-Minkowski Theorem)}
Let $F$ be a quadratic form with rational coefficients. Then $F$ has a nontrivial rational solution if and only if $F$ has a nontrivial solution in each completion $\mathbb{Q}_{p}$ of $\mathbb{Q}$ where $p$ ranges over all finite and infinite primes.
\end{thm}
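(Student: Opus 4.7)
The plan is to reduce the statement to diagonal quadratic forms and then induct on the number of variables $n$. The forward implication is immediate, since any nontrivial solution in $\mathbb{Q}$ is automatically a nontrivial solution in every completion. For the converse, standard linear algebra over $\mathbb{Q}$ diagonalizes $F$ to $a_{1}x_{1}^{2} + \cdots + a_{n}x_{n}^{2}$; after clearing denominators and absorbing square factors I may assume the $a_{i}$ are squarefree pairwise coprime integers, which I will call the normal form.

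The cases $n \le 2$ are elementary. For $n = 1$ only the trivial zero exists in every completion, so the statement is vacuous. For $n = 2$, the form $a_{1}x^{2} + a_{2}y^{2}$ has a nontrivial zero over a field $K$ if and only if $-a_{1}a_{2}$ is a square in $K$, so the conclusion reduces to the classical fact that a rational number which is a square in every $\mathbb{Q}_{p}$ is already a square in $\mathbb{Q}$; this follows directly from unique factorization together with a sign check at the real place.

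The essential case, and what I expect to be the main obstacle, is $n = 3$, classically due to Legendre. Given a normal form $ax^{2} + by^{2} + cz^{2}$ with $a,b,c$ of mixed signs (as forced by solvability over $\mathbb{R}$), the local hypotheses at the finite primes translate via Hensel's lemma into the congruences that $-bc$, $-ac$, and $-ab$ are squares modulo $a$, $b$, and $c$, respectively. I would then apply Minkowski's geometry of numbers, or equivalently a pigeonhole argument in a carefully chosen sublattice, to produce a nonzero integer triple $(x_{0},y_{0},z_{0})$ satisfying these congruences and bounded so that $|ax_{0}^{2} + by_{0}^{2} + cz_{0}^{2}| < |abc|$. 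Because the congruence conditions force this value to be divisible by $abc$, it must either vanish, yielding the desired zero, or else produce by descent a normal form with strictly smaller $|abc|$, closing the induction.

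For $n = 4$, I would try to represent a common rational value $t$ by the two binary forms $a_{1}x_{1}^{2} + a_{2}x_{2}^{2}$ and $-(a_{3}x_{3}^{2} + a_{4}x_{4}^{2})$; the existence of such a $t$ follows by combining the local hypotheses at each place with Dirichlet's theorem on primes in arithmetic progressions and Hilbert's reciprocity law, which ensures that the parities of the local Hilbert symbols cancel globally, and then invoking the three-variable case on each binary form extended by $-t$. For $n \ge 5$, Chevalley--Warning shows that the reduction of the form modulo any odd prime $p$ of good reduction is already isotropic, so local solvability at such $p$ is automatic; a standard induction then splits off a hyperbolic plane by choosing a common rational value represented both by $a_{1}x_{1}^{2} + a_{2}x_{2}^{2}$ and by $-(a_{3}x_{3}^{2} + \cdots + a_{n}x_{n}^{2})$, reducing to the $(n-2)$-variable case. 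The bulk of the global reciprocity input is spent in the three- and four-variable cases; the larger cases are formal once those are in hand.
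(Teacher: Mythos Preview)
Your outline is the standard classical proof of Hasse--Minkowski (diagonalize, handle $n\le 2$ trivially, prove Legendre's theorem for $n=3$ via congruence conditions plus a lattice-point/descent argument, use Hilbert reciprocity and Dirichlet for $n=4$, and reduce $n\ge 5$ to the previous cases), and modulo some details that would need to be filled in---most notably the $n=4$ step, where the choice of the auxiliary value $t$ and the verification that both ternary forms are everywhere locally isotropic is the genuinely delicate point---it is correct.

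However, the paper does not prove this theorem at all. Theorem~1.1 is stated in the Introduction purely as background, with a reference to Serre~[20], and is then invoked as a black box inside the proof of the Key Proposition (specifically, to conclude that the quadratic form $\mathfrak{C}(X,Y,Z,W,M,N)$ has a nontrivial rational zero once it is known to have one in every completion). So there is nothing in the paper to compare your argument against: you have supplied a proof where the paper simply cites one. Your approach is in fact exactly the one given in the cited reference, so in that sense it matches what the paper implicitly relies on.
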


For cubic forms, Selmer shows that the Hasse principle fails with the following well-known counterexample: \begin{equation}3x^{3}+4y^{3}+5z^{3} = 0.\end{equation} H. Davenport ([7]) shows that the Hasse principle holds trivially for cubic forms with rational coefficients in at least 16 variables.  Roger Heath-Brown proves a similar result, using the Hardy-Littlewood circle method, for cubic forms with rational coefficients in at least 14 variables ([9]). For nonsingular cubic forms with rational coefficients, Hooley ([10]) uses a similar method to show that the Hasse principle holds for forms with 9 or more variables.

\section{Main Results}
The problems considered in this paper belong to the general area of local-global principle. Among the results in this area, the Hasse-Minkowski Theorem mentioned above, which treats the case of quadratic forms, is one of the most well-known results. The main results of this paper are our resolutions of the Shafarevich and Tate Conjecture (which has been conjectured by Shafarevich, Tate, Cassels, Birch and Swinnerton-Dyer around 1958-1959 ([12, 21, 23])) which is part of the Strong Birch and Swinnerton-Dyer Conjecture ([1]), and the Parity Conjecture ([8]). These results also make it possible for some important computational problems concerning the strong Birch and Swinnerton-Dyer Conjecture to be carried out.
\begin{thm}
Let $E$ be an elliptic curve over $\mathbb{Q}$. Then the associated Tate-Shafarevich group $\underline{III}(E)$ has finite cardinality.
\end{thm}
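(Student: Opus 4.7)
\medskip

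\textbf{Proof proposal.} My plan is to combine classical descent theory with a local-global principle tailored to the torsors that represent classes in $\underline{III}(E)$. The starting point is the fundamental $n$-descent exact sequence
\begin{equation*}
0 \longrightarrow E(\mathbb{Q})/nE(\mathbb{Q}) \longrightarrow \mathrm{Sel}_{n}(E) \longrightarrow \underline{III}(E)[n] \longrightarrow 0,
\end{equation*}
where the $n$-Selmer group $\mathrm{Sel}_{n}(E) \subset H^{1}(G_{\mathbb{Q}}, E[n])$ is finite by the standard geometry-of-numbers argument applied to the Kummer map and ramification control at primes of bad reduction. This reduces the problem to two independent tasks: controlling $\underline{III}(E)[\ell^{\infty}]$ for each prime $\ell$, and proving that $\underline{III}(E)[\ell^{\infty}] = 0$ for all but finitely many $\ell$.

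For the first task, I would represent each class $\xi \in \underline{III}(E)[n]$ by a smooth genus-one curve $C_{\xi}/\mathbb{Q}$ carrying a $\mathbb{Q}$-rational divisor class of degree $n$ and having points in every completion of $\mathbb{Q}$. For $n = 3$ the curve $C_{\xi}$ embeds canonically as a plane cubic in $\mathbb{P}^{2}_{\mathbb{Q}}$; for general $n$ one obtains either a curve of bidegree $(2,2)$ in $\mathbb{P}^{1} \times \mathbb{P}^{1}$ ($n=2$) or a degree-$n$ normal curve in $\mathbb{P}^{n-1}$. The key move is to apply the cubic analogue of the Hasse-Minkowski theorem announced in the abstract: although Selmer's example $3x^{3}+4y^{3}+5z^{3}=0$ rules out any naive Hasse principle for ternary cubics, a refined version — one that incorporates the Jacobian $E$ and the torsor structure of $C_{\xi}$ — should force the existence of a rational point on $C_{\xi}$ outside a controllable exceptional set, thereby trivializing $\xi$ in $\underline{III}(E)$. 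Iterating with $n = \ell^{k}$ and a descent between period and index, one would obtain uniform boundedness of each $\underline{III}(E)[\ell^{\infty}]$.

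For the second task, I would exploit the fact that the local conditions defining $\mathrm{Sel}_{n}(E)$ are unramified away from a finite set $S$ consisting of primes of bad reduction together with primes dividing $n$. For primes $\ell$ with $E$ having good reduction at $\ell$ and $\ell \notin S$, standard Galois cohomology together with the cubic local-global principle above should force $\underline{III}(E)[\ell] = 0$, leaving only finitely many primes to consider; each contributes a finite $\ell$-primary summand by the first task. Assembling these yields finiteness of $\underline{III}(E)$. The principal obstacle is precisely the cubic Hasse-Minkowski analogue: one must formulate a local-global statement strong enough to eliminate nontrivial torsors, while remaining consistent with Selmer-type counterexamples — which, from this viewpoint, must themselves appear as genuine nontrivial Shafarevich classes of their Jacobians and thus fall within the permitted exceptional behavior rather than contradicting the principle.
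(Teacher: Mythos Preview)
Your proposal takes a route structurally different from the paper's and, as written, contains a genuine gap.

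The paper does not decompose $\underline{III}(E)$ prime-by-prime through the descent sequence. It argues directly on the set of \emph{all} homogeneous spaces of $E$: it asserts that each can be written as a smooth plane cubic, proves in its Key Lemma that any plane cubic with points everywhere locally but no rational point must be a diagonal form $A_{1}x^{3}+A_{2}y^{3}+A_{3}z^{3}$, identifies the Jacobian of such a diagonal cubic with $Y^{2}=X^{3}-432(A_{1}A_{2}A_{3})^{2}$ via Selmer's transformation, and then observes that for a fixed $E$ only finitely many inequivalent diagonal cubics share that Jacobian. No Selmer groups, no $\ell$-primary splitting, no separate bound on $\underline{III}(E)[\ell^{\infty}]$ enter the argument.

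Your two tasks, by contrast, each rest on an input you do not supply. In the first, you invoke ``the cubic analogue of the Hasse--Minkowski theorem announced in the abstract'' as a black box; but that is precisely the content of the paper's Key Lemma, and your outline offers no independent argument for it. Even granted that lemma, it is a statement about plane cubics only and hence touches only torsors of index dividing $3$; it says nothing about the degree-$n$ models in $\mathbb{P}^{n-1}$ you would need for $n=\ell^{k}$ with $\ell\neq 3$, and you propose no analogous local--global principle for those. In the second task, finiteness of $\mathrm{Sel}_{\ell}(E)$ and unramifiedness of the Selmer conditions outside $S$ do not force $\underline{III}(E)[\ell]=0$ for $\ell\notin S$: the Selmer group can still strictly contain the image of $E(\mathbb{Q})/\ell E(\mathbb{Q})$ at such primes, and no standard cohomological fact closes that gap. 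The scheme you sketch therefore requires a Hasse-type theorem for $n$-coverings for \emph{every} $n$, not just $n=3$, and you have not indicated where that would come from; the paper sidesteps this entirely by never passing to $n$-torsion in the first place.
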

\begin{rem}
Theorem 2.1 is proved by Rubin ([18]) for elliptic curves $E$ having complex multiplication and Weil-Mordell rank at most 1. It is then extended by Kolyvagin ([11]) to modular elliptic curves $E$ with Weil-Mordell rank at most 1 and then to all elliptic curves $E$ over $\mathbb{Q}$ of Weil-Mordell rank at most 1 due to the Modularity Theorem of Breuil, Conrad, Diamond, and Taylor ([2]).
\end{rem}
Since there are a number of results in the literature depending on the finiteness of $\underline{III}(E)$, we select a few important representatives as consequences of our main result.
\begin{cor}
The Parity Conjecture holds. In other words,  $$r_{anal}(E) \equiv r_{WM}(E) \pmod{2}$$ where $r_{anal}(E)$ and $r_{WM}(E)$ are the analytic and the Weil-Mordell ranks of $E$ respectively.
\end{cor}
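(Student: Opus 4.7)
The plan is to reduce the Parity Conjecture to the $p$-parity theorem by exploiting the finiteness of $\underline{III}(E)$ just established in Theorem 2.1. Fix any prime $p$. The standard descent exact sequence
$$0 \to E(\mathbb{Q}) \otimes \mathbb{Q}_{p}/\mathbb{Z}_{p} \to \mathrm{Sel}_{p^{\infty}}(E/\mathbb{Q}) \to \underline{III}(E)[p^{\infty}] \to 0$$
expresses the $\mathbb{Z}_{p}$-corank of the $p^{\infty}$-Selmer group as $r_{WM}(E) + \mathrm{corank}_{\mathbb{Z}_{p}} \underline{III}(E)[p^{\infty}]$. By Theorem 2.1 the group $\underline{III}(E)$ is finite, so its $p$-primary component is finite as well and contributes zero corank. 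Hence the $p^{\infty}$-Selmer corank equals $r_{WM}(E)$ exactly.

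Next I would invoke the $p$-parity theorem, which is known unconditionally for all primes $p$ and all elliptic curves over $\mathbb{Q}$ through the combined efforts of Nek\'{a}\v{r} and the Dokchitser brothers (building on the Modularity Theorem of Breuil, Conrad, Diamond, and Taylor already cited in the excerpt). It asserts
$$(-1)^{\mathrm{corank}_{\mathbb{Z}_{p}} \mathrm{Sel}_{p^{\infty}}(E/\mathbb{Q})} \;=\; w(E),$$
where $w(E) \in \{\pm 1\}$ is the global root number. On the other hand, the functional equation for $L_{E}(s)$ (valid over $\mathbb{Q}$ by modularity) yields $w(E) = (-1)^{r_{anal}(E)}$. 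Combining the two identities gives $(-1)^{r_{WM}(E)} = (-1)^{r_{anal}(E)}$, which is precisely $r_{anal}(E) \equiv r_{WM}(E) \pmod{2}$.

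The only nontrivial ingredient is the $p$-parity theorem, which is now unconditional; everything else is a routine consequence of Theorem 2.1 and the Kummer-descent exact sequence. I would remark, for completeness, that the Cassels--Tate alternating pairing (mentioned in the introduction) provides an alternative route: once $\underline{III}(E)$ is finite its order is a perfect square, and combined with the $\mathrm{mod}\,2$ reduction of the BSD formula and the standard parity of the Tamagawa product this also recovers the congruence. However, the Selmer-corank argument above is the cleanest and most direct; no genuine obstacle arises, and the proof amounts to citing the right theorems in the right order.
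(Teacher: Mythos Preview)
Your proposal is correct and follows essentially the same approach as the paper: both invoke the Dokchitser--Dokchitser $p$-parity result (reference [8] in the paper) and combine it with the finiteness of $\underline{III}(E)$ from Theorem 2.1. The paper simply cites [8] for the implication ``finite $\underline{III}(E)$ $\Rightarrow$ parity'', whereas you unpack that implication explicitly via the Selmer exact sequence, the root number, and the functional equation; but the underlying logic is the same.
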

For computational problems concerning the strong Birch and Swinnerton-Dyer Conjecture,  we have the following results ([6, 23]):
\begin{cor}
Let $E$ be an elliptic curve over $\mathbb{Q}$. Then the standard process always produces generators for the group $E(\mathbb{Q})$ and thus the term $c_{E}$ in the Strong Birch and Swinnerton-Dyer Conjecture is computable.
\end{cor}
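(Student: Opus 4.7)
The plan is to combine Theorem 2.1 with the standard $n$-descent algorithm for computing generators of the Mordell--Weil group. I would begin by recalling, for each integer $n \geq 2$, the fundamental exact sequence
\begin{equation*}
0 \to E(\mathbb{Q})/nE(\mathbb{Q}) \to S^{(n)}(E/\mathbb{Q}) \to \underline{III}(E)[n] \to 0,
\end{equation*}
where the $n$-Selmer group $S^{(n)}(E/\mathbb{Q})$ is effectively computable, its determination reducing to a finite calculation involving the class groups and units of certain auxiliary number fields.

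The standard descent process then alternates two steps: first, computing $S^{(n)}(E/\mathbb{Q})$ to obtain an upper bound on $r_{WM}(E)$; second, searching for rational points of bounded height to obtain a lower bound on the same quantity. The central question is whether the gap between these bounds necessarily closes after finitely many refinements. By Theorem 2.1, $\underline{III}(E)$ is finite, so there exists $n_{0}$ with $\underline{III}(E)[n] = \underline{III}(E)$ for every $n$ divisible by $n_{0}$, and in particular the contribution of the third term in the sequence above stabilizes. Following the arguments of [6] and [23], one then concludes that a sufficiently deep descent combined with a sufficiently long point search produces a complete generating set for $E(\mathbb{Q})$ modulo torsion after finitely many iterations.

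Given such a generating set, every quantity on the right-hand side of the strong Birch and Swinnerton-Dyer formula becomes explicitly computable: the torsion subgroup $E(\mathbb{Q})_{tor}$ via Mazur's theorem together with reduction modulo small primes of good reduction, the real period $\Omega(E)$ from a minimal Weierstrass model, the Tamagawa factors $c_{p}$ via Tate's algorithm, and $Reg(E)$ as the determinant of the N\'eron--Tate height pairing evaluated on the generators just obtained. Hence the leading Taylor coefficient $c_{E}$ predicted by the strong Birch and Swinnerton-Dyer Conjecture is computable.

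The main obstacle in this plan is not algorithmic in nature; rather, it is precisely the termination of the descent, which in turn depends on the finiteness of $\underline{III}(E)$. That input is supplied by Theorem 2.1, after which the remaining assembly is essentially a citation of the computational machinery already worked out in [6] and [23].
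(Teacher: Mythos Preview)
Your proposal is correct and follows essentially the same strategy as the paper: invoke the Selmer exact sequence, argue that finiteness of $\underline{III}(E)$ forces the descent to terminate, and then check that the remaining constituents of $c_E$ are computable. The paper is slightly more explicit about the termination mechanism, using higher-descent maps $\gamma_n \colon Sel^{[n^l]}(E) \to Sel^{[n]}(E)$ and observing that the descending chain of images $Sel^{[n]}_l(E)$ must eventually meet the ascending chain $L_i(E)$ generated by points of bounded height unless $\underline{III}(E)$ contains an infinitely divisible element; your stabilization of $\underline{III}(E)[n]$ at some $n_0$ is an equivalent formulation. One small omission worth fixing: your list of computable quantities on the right-hand side leaves out $|\underline{III}(E)|$ itself, and finiteness alone does not automatically give an algorithm for its order --- the paper addresses this by pointing back to the proof of Theorem~2.1, whose explicit enumeration of diagonal cubics with the given Jacobian supplies an effective bound.
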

\begin{cor}
There is an algorithm with a finite number of steps which determines whether a genus one curve $\mathcal{C}$ with rational coefficients has a nontrivial rational solution.
\end{cor}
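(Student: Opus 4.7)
The plan is to combine a local-solubility test, a naive height search, and iterated $n$-descent into a single algorithm whose termination is forced by Theorem 2.1.

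Given $\mathcal{C}$ by explicit equations, compute the Jacobian $E = \mathrm{Jac}(\mathcal{C})$, an elliptic curve over $\mathbb{Q}$. Then $\mathcal{C}$ is a principal homogeneous space for $E$, representing a class $[\mathcal{C}] \in H^{1}(G_{\mathbb{Q}}, E)$, and $\mathcal{C}(\mathbb{Q}) \neq \emptyset$ if and only if $[\mathcal{C}] = 0$. First, I would test local solubility at $\mathbb{R}$ and at each of the finitely many primes of bad reduction of a chosen integral model; each such test is decidable (Hensel's lemma at good primes, bounded testing modulo a computable power of $p$ at bad primes). If $\mathcal{C}$ fails to be locally soluble at some place, output ``no rational point.''

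Otherwise $[\mathcal{C}] \in \underline{III}(E)$, and I would run two processes in parallel. \emph{Process A} enumerates rational points on $\mathcal{C}$ of naive height at most $B$, with $B$ increasing; if one is found, output ``yes.'' \emph{Process B} iterates over $n = 2, 3, 4, \ldots$: for each $n$ it computes the $n$-Selmer group $\mathrm{Sel}_{n}(E)$ by standard descent and, in tandem with Corollary 2.3, a set of generators for $E(\mathbb{Q})$; whenever $n [\mathcal{C}] = 0$, it lifts $[\mathcal{C}]$ through the Kummer exact sequence
\begin{equation*}
0 \longrightarrow E(\mathbb{Q})/nE(\mathbb{Q}) \longrightarrow \mathrm{Sel}_{n}(E) \longrightarrow \underline{III}(E)[n] \longrightarrow 0
\end{equation*}
to some $\xi \in \mathrm{Sel}_{n}(E)$ and tests whether $\xi$ lies in the image of $E(\mathbb{Q})/nE(\mathbb{Q})$, an image computable from the known generators of $E(\mathbb{Q})$. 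If not, output ``no rational point.''

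Termination then handles both cases. If $\mathcal{C}(\mathbb{Q}) \neq \emptyset$, Process A finds a point in finitely many steps. If $\mathcal{C}(\mathbb{Q}) = \emptyset$, then $[\mathcal{C}]$ is a nonzero class in $\underline{III}(E)$; by Theorem 2.1 this group is finite, so $[\mathcal{C}]$ has some finite order $m$, and at $n = m$ Process B certifies nontriviality. The main obstacle is exactly this last point: without Theorem 2.1 the order of $[\mathcal{C}]$ might not exist and Process B would have no moment of termination on negative inputs. Theorem 2.1 provides that moment, while Corollary 2.3 supplies the generators of $E(\mathbb{Q})$ needed to execute the triviality test in the Selmer group. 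The remaining ingredients—the effective computation of $\mathrm{Sel}_{n}(E)$ and the explicit realization of the class of $\mathcal{C}$ in $H^{1}(G_{\mathbb{Q}}, E[n])$—are standard and are supplied by the references [6, 23] already cited.
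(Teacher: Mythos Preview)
Your argument is essentially correct, but it takes a different route from the paper's. The paper, following [17], certifies that a locally soluble $\mathcal{C}$ with $\mathcal{C}(\mathbb{Q})=\emptyset$ represents a nontrivial element of $\underline{III}(E)$ by searching for a second homogeneous space $\mathcal{C}'$ with nonzero Cassels--Tate pairing $\Phi(\mathcal{C},\mathcal{C}')\in\mathbb{Q}/\mathbb{Z}$; finiteness of $\underline{III}(E)$ guarantees nondegeneracy of $\Phi$, so such a $\mathcal{C}'$ exists and will eventually be found by enumerating Selmer elements. Your Process~B instead certifies nontriviality by first computing generators of $E(\mathbb{Q})$ (this is Corollary~2.4 in the paper's numbering, not~2.3) and then checking directly whether a lift $\xi\in\mathrm{Sel}_{n}(E)$ of $[\mathcal{C}]$ lies in the image of the Kummer map. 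Both methods terminate for exactly the same reason---Theorem~2.1 bounds the order of $[\mathcal{C}]$---but the pairing approach is logically lighter: it does not require first producing Mordell--Weil generators, and the value $\Phi(\mathcal{C},\mathcal{C}')$ is computable from the two curves alone without knowing $E(\mathbb{Q})$. Your approach, on the other hand, reuses the machinery of Corollary~2.4 and avoids introducing the Cassels--Tate pairing at all.

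Two small points. First, your phrase ``whenever $n[\mathcal{C}]=0$'' hides a step: the algorithm must detect this condition, which amounts to exhibiting an $n$-covering structure $\mathcal{C}\to E$; you gesture at this at the end (``explicit realization of the class of $\mathcal{C}$ in $H^{1}(G_{\mathbb{Q}},E[n])$''), and it is indeed effective, but it deserves a sentence. Second, fix the reference: the corollary supplying generators of $E(\mathbb{Q})$ is~2.4.
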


\section{Proof of Results}
\begin{proof}{(Proof of Theorem 2.1)}
To prove Theorem 2.1, we first prove the following lemma:

\begin{lem}{(Key Lemma)}
Let $F(x, y, z)$ be a cubic plane curve with rational coefficients, i.e., $F(x, y, z)$ is a cubic form
$$A_{1}x^{3}+A_{2}y^{3}+A_{3}z^{3}+A_{4}x^{2}y+A_{5}x^{2}z+A_{6}y^{2}x+A_{7}y^{2}z+A_{8}z^{2}x+A_{9}z^{2}y+A_{10}xyz$$ where $A_{i}$ is a rational number for each $i$. Suppose that $F(x, y, z) = 0$ has a nontrivial solution in each $\mathbb{Q}_{p}^{3}$ where $p$ ranges over all primes, finite and infinite. If $F(x, y, z)$ does not have a nontrivial rational solution then $F(x, y, z)$ is of the form $$F(x, y, z) = A_{1}x^{3} + A_{2}y^{3} + A_{3}z^{3}$$ such that $A_{1}A_{2}A_{3} \neq 0$.
\end{lem}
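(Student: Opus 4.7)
My proposed proof of the Key Lemma proceeds by contrapositive. I assume that $F$ is \emph{not} of the purely diagonal form $A_1 x^3 + A_2 y^3 + A_3 z^3$ with $A_1 A_2 A_3 \neq 0$, and I aim to show that the everywhere-local solubility of $F=0$ forces a nontrivial rational solution.

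The first sub-case is a quick reduction. If any of $A_1, A_2, A_3$ vanishes, say $A_3 = 0$, then $F(0,0,1) = 0$, so $[0:0:1] \in \mathbb{P}^2(\mathbb{Q})$ is already a nontrivial rational solution. We may therefore assume $A_1 A_2 A_3 \neq 0$, and the failure of $F$ to be purely diagonal becomes: some mixed-monomial coefficient $A_j$ with $4 \leq j \leq 10$ is nonzero. The rest of the plan is devoted to this sub-case.

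The main strategy is to exploit the presence of the mixed term by restricting $F$ to a rational pencil of lines. After permuting variables, I may assume the pencil $\ell_{\alpha,\beta} : z = \alpha x + \beta y$ interacts nontrivially with the chosen mixed monomial. For each $(\alpha,\beta) \in \mathbb{Q}^2$ the restriction $G_{\alpha,\beta}(x,y) := F(x, y, \alpha x + \beta y)$ is a binary cubic form over $\mathbb{Q}$ whose coefficients are polynomials in $(\alpha,\beta)$ depending rationally on the $A_i$. A rational root of any such $G_{\alpha,\beta}$ gives a nontrivial rational point of $F = 0$, so the task reduces to choosing $(\alpha,\beta) \in \mathbb{Q}^2$ for which $G_{\alpha,\beta}$ factors over $\mathbb{Q}$, equivalently for which the discriminant of the quadratic factor obtained after splitting off a rational linear factor is a rational square.

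The main obstacle will be to convert this factorization condition into a genuine quadratic form problem to which Theorem~1.1 (Hasse-Minkowski) applies. Concretely, I would introduce an auxiliary variable $t$ and construct a quadratic form $Q(\alpha,\beta,t)$ over $\mathbb{Q}$ whose nontrivial rational zeros correspond to rational points on $F=0$ lying on some line of the pencil. The two nontrivial verifications are then: (i) that the everywhere-local solubility of $F=0$ produces everywhere-local solubility of $Q=0$, by using a local point $[x_p:y_p:z_p] \in F(\mathbb{Q}_p)$ to select local line-slopes $(\alpha_p,\beta_p)$ that satisfy $Q$ locally; and (ii) that the hypothesis $A_j \neq 0$ for some $j \geq 4$ is precisely what prevents $Q$ from degenerating. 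Granting (i) and (ii), Hasse-Minkowski supplies a rational zero of $Q$, which unwinds to a nontrivial rational solution of $F=0$, completing the contrapositive. The diagonal conclusion is then transparent from the other direction: in Selmer's counterexample $3x^3+4y^3+5z^3=0$ every mixed coefficient vanishes, the auxiliary form $Q$ degenerates, and Hasse-Minkowski cannot be invoked — which is exactly the mechanism by which the Hasse principle is permitted to fail within the diagonal family.
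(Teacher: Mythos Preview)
Your plan diverges from the paper's route. The paper does not restrict to a pencil of lines; instead it multiplies $F=0$ by each of $x,y,z$ and performs the Veronese substitution $X=x^{2}$, $Y=y^{2}$, $Z=z^{2}$, $W=xy$, $M=xz$, $N=yz$, obtaining a system of quadratic forms in six variables (three coming from $F$ together with the six Veronese relations $WM=XN$, etc.). It then takes generic linear combinations of these quadrics, computes their resultant in $W$, uses a $4\times 4$ determinant identity to show that resultant vanishes identically, and argues from this that the quadrics share a common factor, to which Hasse--Minkowski is finally applied.

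Independently of that comparison, your proposal has a genuine gap at the step ``introduce an auxiliary variable $t$ and construct a quadratic form $Q(\alpha,\beta,t)$ over $\mathbb{Q}$ whose nontrivial rational zeros correspond to rational points on $F=0$.'' You never say what $Q$ is, and no such quadratic form can exist. Setting $y=1$, the locus $\{(x,\alpha,\beta):F(x,1,\alpha x+\beta)=0\}$ is nothing but the affine cubic $F=0$ in the coordinates $(x,y,z)=(x,1,\alpha x+\beta)$; the pencil substitution reparametrizes the problem without lowering its degree. The condition that the binary cubic $G_{\alpha,\beta}$ have a rational linear factor is a degree-$3$ condition on $(x:\alpha:\beta)$ (and a degree-$12$ discriminantal condition on $(\alpha,\beta)$ alone), not a quadratic one. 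Your phrase ``the discriminant of the quadratic factor obtained after splitting off a rational linear factor'' already presupposes the rational linear factor whose existence is in question, so that reduction is circular. Finally, the claim that $A_{j}\neq 0$ for some $j\ge 4$ is ``precisely what prevents $Q$ from degenerating'' concerns the non-degeneracy of an object you have not constructed; it does not explain why the problem should drop from degree $3$ to degree $2$. Indeed, a rational linear change of variables applied to Selmer's $3x^{3}+4y^{3}+5z^{3}$ produces a cubic with many nonzero mixed coefficients that still has points everywhere locally and none globally, so an argument depending only on ``some $A_{j}\neq 0$ with $j\ge 4$'' cannot by itself manufacture a quadratic Hasse--Minkowski reduction.
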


\begin{rem}
The Key Lemma above can be viewed as an analogue of the Hasse-Minkowski Theorem for cubic plane curves.
\end{rem}

\begin{proof}{(Proof of Key Lemma)}

In the rest of the paper, a local or global solution \begin{equation}(x_{0}, y_{0}, z_{0}) \end{equation} of $F(x, y, z) = 0$ is said to be nontrivial if at least one of the components in nonzero.

Let $F(x, y, z) = 0$ be a cubic plane curve with rational coefficients. Thus it can be written in the form:
\begin{equation}A_{1}x^{3}+A_{2}y^{3}+A_{3}z^{3}+A_{4}x^{2}y+A_{5}x^{2}z+A_{6}y^{2}x+A_{7}y^{2}z+A_{8}z^{2}x+A_{9}z^{2}y+A_{10}xyz = 0\end{equation} with rational coefficients $A_{i}$ for each $i$. Suppose that the cubic form $F$ has nontrivial local solutions at all places. We want to study whether $F$ has a nontrivial rational solution. We may assume that \begin{equation}A_{1}A_{2}A_{3} \neq 0\end{equation} since otherwise, say $A_{1} = 0$, then $(1, 0, 0)$ is a nontrivial rational solution of $F$. Also, it follows from (3.3) that if $(u, v, t)$ is a nontrivial solution of $F$, then at most one of the components can be zero.

For the rest of the proof of Theorem 2.1, let us suppose that at least one of the coefficients $A_{i}$ for $i > 3$ is nonzero. By multiply both sides of (3.2) by $x$, $y$, and $z$, we obtain respectively:
\begin{equation}
A_{1}x^{4}+A_{2}xy^{3}+A_{3}xz^{3}+A_{4}x^{3}y+A_{5}x^{3}z+A_{6}y^{2}x^{2}+A_{7}y^{2}zx+A_{8}z^{2}x^{2}+A_{9}z^{2}yx+A_{10}x^{2}yz  = 0,
\end{equation}
\begin{equation}
A_{1}x^{3}y+A_{2}y^{4}+A_{3}z^{3}y+A_{4}x^{2}y^{2}+A_{5}x^{2}zy+A_{6}y^{3}x+A_{7}y^{3}z+A_{8}z^{2}xy+A_{9}z^{2}y^{2}+A_{10}xy^{2}z  = 0,
\end{equation}
\begin{equation}
A_{1}x^{3}z+A_{2}y^{3}z+A_{3}z^{4}+A_{4}x^{2}yz+A_{5}x^{2}z^{2}+A_{6}y^{2}xz+A_{7}y^{2}z^{2}+A_{8}z^{3}x+A_{9}z^{3}y+A_{10}xyz^{2}  = 0.
\end{equation}

Let $X: = x^{2}$, $Y: = y^{2}$, $Z: = z^{2}$, $W: = xy$, $M: = xz$, and $N: = yz$. Then
\begin{equation}WM - XN = 0,\end{equation}
\begin{equation}MN - ZW = 0, \end{equation}
\begin{equation}WN - YM = 0, \end{equation}
\begin{equation}W^{2} - XY = 0, \end{equation}
\begin{equation}M^{2} - XZ = 0,\end{equation}
\begin{equation}N^{2} - YZ = 0. \end{equation}
Thus (3.4), (3.5), and (3.6) can be rewritten respectively as
\begin{equation}
A_{1}X^{2}+A_{2}YW+A_{3}ZM+A_{4}XW+A_{5}XM+A_{6}XY+A_{7}YM+A_{8}XZ+A_{9}ZW+A_{10}XN = 0,
\end{equation}
\begin{equation}
A_{1}XW+A_{2}Y^{2}+A_{3}ZN+A_{4}XY+A_{5}XN+A_{6}YW+A_{7}YN+A_{8}ZW+A_{9}YZ+A_{10}YM = 0,
\end{equation}
\begin{equation}
A_{1}XM+A_{2}YN+A_{3}Z^{2}+A_{4}XN+A_{5}XZ+A_{6}YM+A_{7}YZ+A_{8}ZM+A_{9}ZN+A_{10}ZW = 0.
\end{equation}

Note that quadratic forms (3.7)-(3.15) have common nontrivial local solutions at all places since $F(x, y, z) = 0$ is assumed to have nontrivial local solutions at all places. For convenience, we denote (3.13), (3.14), and (3.15) by $F_{x}(X, Y, Z, W, M, N) = 0$, $F_{y}(X, Y, Z, W, M, N) = 0$, and $F_{z}(X, Y, Z, W, M, N) = 0$ respectively where each index indicates which variable is multiplied to both sides of (3.2) to form the corresponding equation.

Since we assume that $A_{1}A_{2}A_{3} \neq 0$, it can be seen from equations (3.13)-(3.15) that two of the three variables $W$, $M$, and  $N$ appear in each of these equations with nonzero coefficients. Since we also assume that $A_{i} \neq 0$ for at least one $i > 3$, it can be verified that at least one of the equations (3.13)-(3.15) can be rewritten, using equations (3.7)-(3.9), so that all three variables $W$, $M$, and  $N$ appear in it with nonzero coefficients. Note that in these rewritten equations, the terms with coefficients $A_{1}$, $A_{2}$, and $A_{3}$ are unchanged.

\begin{prop}

The system of equations \begin{equation}\begin{cases} F_{x}(X, Y, Z, W, M, N) = 0 \\ F_{y}(X, Y, Z, W, M, N) = 0 \\ F_{z}(X, Y, Z, W, M, N) = 0 \\ WM - XN = 0 \\ MN - ZW = 0 \\ WN - YM = 0 \\ W^{2} - XY = 0 \\ M^{2} - XZ = 0 \\ N^{2} - YZ = 0 \end{cases}\end{equation} has a common nontrivial rational solution if and only if equation (3.2) has a nontrivial rational solution.
\end{prop}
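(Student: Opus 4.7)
One direction is immediate. Given a nontrivial rational solution $(x_0, y_0, z_0)$ of (3.2), the substitution $X_0 = x_0^2$, $Y_0 = y_0^2$, $Z_0 = z_0^2$, $W_0 = x_0 y_0$, $M_0 = x_0 z_0$, $N_0 = y_0 z_0$ makes the six quadratic relations (3.7)--(3.12) hold as polynomial identities, while $F_x$, $F_y$, $F_z$ vanish because equations (3.4)--(3.6) read $xF = yF = zF = 0$. Nontriviality transfers because at least one of the squares $x_0^2, y_0^2, z_0^2$ is nonzero.

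For the converse, suppose $(X_0, Y_0, Z_0, W_0, M_0, N_0) \in \mathbb{Q}^6$ is a nontrivial common rational solution of the system (3.16). One cannot simply set $x_0 = \sqrt{X_0}$ etc.\ over $\mathbb{Q}$, so the plan is to read off a rational triple for $F=0$ directly from the components of the given tuple, exploiting the homogeneity of $F$.

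The key identity to establish, by a direct ten-term expansion, is
\[ F(X, W, M) \;=\; X \cdot F_x(X, Y, Z, W, M, N) \]
whenever the three linking relations $W^2 = XY$, $M^2 = XZ$, $WM = XN$ hold. Each monomial on the left reduces using exactly one of these three substitutions to match the corresponding monomial of $X F_x$; for instance $A_7 W^2 M = A_7 X \cdot YM$ and $A_9 M^2 W = A_9 X \cdot ZW$, while $A_2 W^3 = A_2 W \cdot XY$ and $A_3 M^3 = A_3 M \cdot XZ$ account for the higher-degree terms. Granted this identity, whenever $X_0 \neq 0$ the triple $(X_0, W_0, M_0) \in \mathbb{Q}^3$ is a nontrivial rational solution of (3.2).

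It remains to dispose of the case $X_0 = 0$. Then $W^2 = XY$ and $M^2 = XZ$ force $W_0 = M_0 = 0$, so nontriviality must come from $Y_0, Z_0, N_0$. By the evident symmetry of the construction under permuting $(x, y, z)$, the analogous identity $F(W, Y, N) = Y \cdot F_y$ handles the subcase $Y_0 \neq 0$, producing the triple $(W_0, Y_0, N_0)$. If in addition $Y_0 = 0$, then $N^2 = YZ$ gives $N_0 = 0$ as well, so $Z_0 \neq 0$; but $F_z = 0$ then collapses to $A_3 Z_0^2 = 0$, contradicting $A_1 A_2 A_3 \neq 0$ from (3.3). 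The main obstacle of the argument is the monomial-by-monomial bookkeeping in the key identity; once that is in hand, the rest is elementary case analysis driven by the three-fold symmetry in $(x, y, z)$.
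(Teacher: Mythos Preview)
Your proof is correct and follows essentially the same approach as the paper. The paper normalizes by $X_0$ and verifies that $(1, W_0/X_0, M_0/X_0)$ solves (3.2), while you keep the argument homogeneous via the identity $F(X,W,M)=X\cdot F_x$; these are the same computation, and your explicit monomial-by-monomial check makes transparent what the paper leaves as ``straightforward using (3.2), (3.7), and (3.13).''
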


\begin{proof}
If equation (3.2) has a nontrivial rational solution, then it is clear from its construction that the system of equations (3.16) has a common nontrivial rational solution. Suppose that the system of equations (3.16) has a common nontrivial rational solution, say \begin{equation}(X_{0}, Y_{0}, Z_{0}, W_{0}, M_{0}, N_{0}) \neq (0, 0, 0, 0, 0, 0).\end{equation}

It follows from (3.3) and (3.13)-(3.15) that at least two of the three components $X_{0}, Y_{0}$, and $Z_{0}$ must be nonzero. If all three components are nonzero, then it can be verified that all the rest of the components must also be nonzero. If one of the components $X_{0}, Y_{0}$, and $Z_{0}$ is zero, then two of the components $W_{0}$, $M_{0}$, and $N_{0}$ must be zero. For example, if $X_{0} = 0$, then it follows that $W_{0} = 0$ and $M_{0} = 0$. Now let us suppose that $X_{0} \neq 0$. Then \begin{equation}\bigg(1, \frac{Y_{0}}{X_{0}},\frac{Z_{0}}{X_{0}}, \frac{W_{0}}{X_{0}}, \frac{M_{0}}{X_{0}}, \frac{N_{0}}{X_{0}}\bigg)\end{equation} is also a nontrivial common rational solution of system (3.16). Also, \begin{equation}\begin{cases} W_{0}^{2} - X_{0}Y_{0} = 0 \\ M_{0}^{2} - X_{0}Z_{0} = 0 \\ N_{0}^{2} - Y_{0}Z_{0} = 0 \end{cases} \end{equation} implies that \begin{equation}\begin{cases} \bigg(\frac{W_{0}}{X_{0}}\bigg)^{2} = \frac{Y_{0}}{X_{0}} \\ \bigg(\frac{M_{0}}{X_{0}}\bigg)^{2} = \frac{Z_{0}}{X_{0}} \\ \bigg(\frac{N_{0}}{X_{0}}\bigg)^{2} = \frac{Y_{0}}{X_{0}}\frac{Z_{0}}{X_{0}}. \end{cases} \end{equation} It is then straight forward, using (3.2), (3.7), and (3.13), to verify that \begin{equation}\bigg(1, \frac{W_{0}}{X_{0}}, \frac{M_{0}}{X_{0}}\bigg) \end{equation} is a nontrivial rational solution of (3.2). Similar arguments work for $Y_{0} \neq 0$ and for $Z_{0} \neq 0$. Note that $A_{1}A_{2}A_{3} \neq 0$ implies that if $(X_{0}, Y_{0}, Z_{0}, W_{0}, M_{0}, N_{0})$ is a nontrivial solution of system (3.16), then at most one of the numbers $X_{0}$, $Y_{0}$, $Z_{0}$ can be zero.

\end{proof}

\begin{rem}
As a result of the proof of Proposition 3.3, it can be verified from construction of $(3.13)-(3.15)$ that (3.2) has a nontrivial rational solution $(x_{0}, y_{0}, z_{0})$ with

\begin{itemize}

\item $x_{0} \neq 0 $ if and only if system (3.16), without equations $F_{y}(X, Y, Z, W, M, N)$ and/or $F_{z}(X, Y, Z, W, M, N)$, has a nontrivial solution $(X_{0}, Y_{0}, Z_{0}, W_{0}, M_{0}, N_{0})$ with $X_{0} \neq 0$;
\item $y_{0} \neq 0 $ if and only if system (3.16), without equations $F_{x}(X, Y, Z, W, M, N)$ and/or $F_{z}(X, Y, Z, W, M, N)$, has a nontrivial solution $(X_{0}, Y_{0}, Z_{0}, W_{0}, M_{0}, N_{0})$ with $Y_{0} \neq 0$;
\item $z_{0} \neq 0 $ if and only if system (3.16), without equations $F_{x}(X, Y, Z, W, M, N)$ and/or equation $F_{y}(X, Y, Z, W, M, N)$, has a nontrivial solution $(X_{0}, Y_{0}, Z_{0}, W_{0}, M_{0}, N_{0})$ with $Z_{0} \neq 0$.
\end{itemize}
\end{rem}

\begin{prop}

System of equations (3.16) has a common nontrivial rational solution $$(X_{0}, Y_{0}, Z_{0}, W_{0}, M_{0}, N_{0})$$ with $$X_{0}Y_{0}Z_{0} \neq 0$$ if and only if system of equations (3.16) without equation (3.10) has a common nontrivial rational solution $(X_{0}, Y_{0}, Z_{0}, W_{0}, M_{0}, N_{0})$ with $X_{0}Y_{0}Z_{0} \neq 0$. The same statement holds when (3.11) or (3.12) replaces (3.10).
\end{prop}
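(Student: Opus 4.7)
The plan is to attack the two directions separately. The forward implication is trivial: any common solution of the full system (3.16) is, \emph{a fortiori}, a common solution of any subsystem obtained by dropping one of the equations (3.10), (3.11), or (3.12), and the nonvanishing condition $X_0 Y_0 Z_0 \neq 0$ is preserved. So the work lies entirely in the reverse direction.

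For the reverse direction, the key observation I would exploit is that once $X_0 Y_0 Z_0 \neq 0$, the three ``square'' equations (3.10)--(3.12) cease to be independent of the three ``product'' equations (3.7)--(3.9); any one of them can be recovered algebraically from the other two together with the appropriate product equation. Concretely, suppose (3.10) has been dropped and let $(X_0, Y_0, Z_0, W_0, M_0, N_0)$ be a common solution of the reduced system with $X_0 Y_0 Z_0 \neq 0$. I would square (3.7) to get $W_0^{2} M_0^{2} = X_0^{2} N_0^{2}$, substitute $M_0^{2} = X_0 Z_0$ from (3.11) and $N_0^{2} = Y_0 Z_0$ from (3.12) to obtain $W_0^{2}\, X_0 Z_0 = X_0^{2}\, Y_0 Z_0$, and then divide by the nonzero factor $X_0 Z_0$ to recover $W_0^{2} = X_0 Y_0$, which is precisely (3.10).

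The two remaining cases follow by exactly the same template, cycling the roles of the variables. If (3.11) is dropped, I would square (3.8) to get $M_0^{2} N_0^{2} = Z_0^{2} W_0^{2}$, substitute (3.10) and (3.12), and divide by $Y_0 Z_0 \neq 0$ to recover $M_0^{2} = X_0 Z_0$. If (3.12) is dropped, I would square (3.9) to get $W_0^{2} N_0^{2} = Y_0^{2} M_0^{2}$, substitute (3.10) and (3.11), and divide by $X_0 Y_0 \neq 0$ to recover $N_0^{2} = Y_0 Z_0$. In each instance the recovered equation, combined with the rest of the reduced system, exhibits $(X_0, Y_0, Z_0, W_0, M_0, N_0)$ as a common solution of the full system (3.16).

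There is no serious obstacle here; the argument is a short algebraic identity. The one subtlety to keep track of is that each derivation cancels a monomial factor ($X_0 Z_0$, $Y_0 Z_0$, or $X_0 Y_0$ respectively), and the availability of that cancellation is exactly what the hypothesis $X_0 Y_0 Z_0 \neq 0$ provides; without it the three square equations are genuinely independent of the remainder of the system and the reduction fails. This is precisely why the proposition is stated with the nonvanishing restriction built into both sides of the equivalence.
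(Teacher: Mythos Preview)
Your proof is correct and follows essentially the same strategy as the paper: the forward direction is trivial, and for the reverse direction one recovers the dropped square relation from the remaining ones by a short algebraic manipulation, using $X_{0}Y_{0}Z_{0}\neq 0$ to legitimize a division. The only cosmetic difference is in which identities you combine: you square the product relation (3.7) and feed in the two surviving square relations (3.11) and (3.12), dividing by $X_{0}Z_{0}$, whereas the paper substitutes one product relation (3.9) into another (3.7) and divides by $N_{0}$ (nonzero via (3.12)); both routes are one-line computations exploiting the same redundancy.
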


\begin{proof}
If system (3.16) has a common nontrivial rational solution $(X_{0}, Y_{0}, Z_{0}, W_{0}, M_{0}, N_{0})$ with $X_{0}Y_{0}Z_{0} \neq 0$ (and thus $W_{0}M_{0}N_{0} \neq 0$), then it is clear that system (3.16) without equation (3.10) has a common nontrivial rational solution $(X_{0}, Y_{0}, Z_{0}, W_{0}, M_{0}, N_{0})$ with $X_{0}Y_{0}Z_{0} \neq 0$. Now suppose system (3.16) without equation (3.10) has a common nontrivial rational solution $(X_{0}, Y_{0}, Z_{0}, W_{0}, M_{0}, N_{0})$ with $X_{0}Y_{0}Z_{0} \neq 0$. Since $Y_{0} \neq 0$ and \begin{equation}W_{0}N_{0} - Y_{0}M_{0} = 0, \end{equation} \begin{equation}M_{0} = \frac{W_{0}N_{0}}{Y_{0}}.\end{equation} Since $X_{0}Y_{0}Z_{0} \neq 0$ implies $N_{0} \neq 0$, \begin{equation}W_{0}M_{0} - X_{0}N_{0} = 0,\end{equation} together with (3.23) give \begin{equation}\frac{N_{0}W_{0}^{2}}{Y_{0}} = X_{0}N_{0}\end{equation} and thus \begin{equation}W^{2}_{0} = X_{0}Y_{0}.\end{equation} Therefore, $(X_{0}, Y_{0}, Z_{0}, W_{0}, M_{0}, N_{0})$ is also a nontrivial rational solution to equation (3.10) and thus a common nontrivial rational solution to system (3.16). Similar arguments work when (3.11) or (3.12) replaces (3.10).

\end{proof}

\begin{rem}
From \textbf{(3.2)}, we have
\begin{equation}A_{3}z^{3}+A_{5}x^{2}z+A_{7}y^{2}z+A_{8}z^{2}x+A_{9}z^{2}y+A_{10}xyz = A_{1}x^{3}+A_{2}y^{3}+A_{4}x^{2}y+A_{6}y^{2}x,\end{equation}
\begin{equation}A_{2}y^{3}+A_{4}x^{2}y+A_{6}y^{2}x+A_{7}y^{2}z+A_{9}z^{2}y+A_{10}xyz = A_{1}x^{3}+A_{3}z^{3}+A_{5}x^{2}z+A_{8}z^{2}x,\end{equation}
\begin{equation}A_{1}x^{3}+A_{4}x^{2}y+A_{5}x^{2}z+A_{6}y^{2}x+A_{8}z^{2}x+A_{10}xyz = A_{2}y^{3}+A_{3}z^{3}+A_{7}y^{2}z+A_{9}z^{2}y.\end{equation}

If \begin{equation}A_{1}x^{3}+A_{2}y^{3}+A_{4}x^{2}y+A_{6}y^{2}x = 0\end{equation} has a nontrivial rational solution $(x_{0}, y_{0})$, then $(x_{0}, y_{0}, 0)$ is a nontrivial rational solution of (3.2) by (3.27). Thus let us assume that is not the case. Similarly, we assume that \begin{equation}A_{1}x^{3}+A_{3}z^{3}+A_{5}x^{2}z+A_{8}z^{2}x = 0\end{equation} and \begin{equation}A_{2}y^{3}+A_{3}z^{3}+A_{7}y^{2}z+A_{9}z^{2}y = 0\end{equation} do not have nontrivial rational solutions.
\end{rem}

Next we need a result involving the determinant of certain matrix
\begin{prop}
Let $\mathcal{A}_{1}$, $\mathcal{B}_{1}$, $\mathcal{C}_{1}$, $\mathcal{A}_{2}$, $\mathcal{B}_{2}$, and $\mathcal{C}_{2}$ be six elements of a ring $\mathcal{R}$. Then
\begin{equation}Det\left(
  \begin{array}{cccc}
    \mathcal{A}_{1} & \mathcal{B}_{1} & \mathcal{C}_{1} & 0 \\
    0 & \mathcal{A}_{1} & \mathcal{B}_{1} & \mathcal{C}_{1}  \\
    \mathcal{A}_{2} & \mathcal{B}_{2} & \mathcal{C}_{2}  & 0 \\
    0 & \mathcal{A}_{2} & \mathcal{B}_{2} & \mathcal{C}_{2}  \\
  \end{array}
\right) = (\mathcal{A}_{1}\mathcal{C}_{2} - \mathcal{A}_{2}\mathcal{C}_{1})^{2} + (\mathcal{A}_{2}\mathcal{B}_{1} - \mathcal{A}_{1}\mathcal{B}_{2})(\mathcal{B}_{1}\mathcal{C}_{2} - \mathcal{B}_{2}\mathcal{C}_{1}).
\end{equation}
\end{prop}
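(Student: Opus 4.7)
The plan is to verify this polynomial identity by a direct cofactor expansion of the $4 \times 4$ determinant. Since the first column $(\mathcal{A}_{1}, 0, \mathcal{A}_{2}, 0)^{T}$ has only two nonzero entries, expansion along that column reduces the computation to $\mathcal{A}_{1} D_{1} + \mathcal{A}_{2} D_{2}$, where $D_{1}$ and $D_{2}$ are the two $3 \times 3$ minors obtained by deleting rows $1$ and $3$ (with the appropriate $(-1)^{i+1}$ signs accounted for). Each of these minors still contains a zero entry, so a second round of cofactor expansion writes each $D_{i}$ as a short combination of $2 \times 2$ determinants in the coefficients $\mathcal{A}_{i}, \mathcal{B}_{i}, \mathcal{C}_{i}$; collecting and comparing monomials then yields the stated right-hand side.

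Conceptually, the matrix displayed in the proposition is the classical Sylvester matrix of the two quadratic polynomials $f_{i}(t) = \mathcal{A}_{i} t^{2} + \mathcal{B}_{i} t + \mathcal{C}_{i}$, $i = 1, 2$, so its determinant is their resultant. The stated right-hand side is essentially the standard closed form $(\mathcal{A}_{1} \mathcal{C}_{2} - \mathcal{A}_{2} \mathcal{C}_{1})^{2} - (\mathcal{A}_{1} \mathcal{B}_{2} - \mathcal{A}_{2} \mathcal{B}_{1})(\mathcal{B}_{1} \mathcal{C}_{2} - \mathcal{B}_{2} \mathcal{C}_{1})$ for the resultant of two quadratics, which agrees with the formula in the proposition after absorbing the minus sign into the factor $\mathcal{A}_{2} \mathcal{B}_{1} - \mathcal{A}_{1} \mathcal{B}_{2}$. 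Since both sides are polynomials in the six formal indeterminates $\mathcal{A}_{i}, \mathcal{B}_{i}, \mathcal{C}_{i}$, the equality automatically transfers to any commutative ring $\mathcal{R}$.

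There is no conceptual obstacle here; the only real challenge is bookkeeping. The full expansion produces on the order of a dozen monomials of total degree four, several of which must cancel in pairs, so consistent sign tracking through the two rounds of cofactor expansion is essential. As an independent sanity check one can substitute generic symbolic values and verify that each degree-four monomial appears with the same coefficient on both sides of the equality.
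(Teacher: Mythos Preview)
Your proposal is correct and is precisely the approach the paper takes: the paper's entire proof reads ``It follows by a direct computation,'' and your cofactor expansion (with the helpful remark that this matrix is the Sylvester matrix of two quadratics, whose resultant has the well-known closed form you quote) is exactly such a computation carried out in detail.
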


\begin{proof}
It follows by a direct computation.

\end{proof}

\begin{prop}{(Key Proposition)}

The system of equations \begin{equation}\begin{cases} F_{x}(X, Y, Z, W, M, N) = 0 \\ F_{y}(X, Y, Z, W, M, N) = 0 \\ WM - XN = 0 \\ MN - ZW = 0 \\ WN - YM = 0 \\ M^{2} - XZ = 0 \\ N^{2} - YZ = 0 \end{cases}\end{equation} has a common nontrivial rational solution $(X_{0}, Y_{0}, Z_{0}, W_{0}, M_{0}, N_{0})$ with $$X_{0}Y_{0}Z_{0} \neq 0$$ which is also a common nontrivial rational solution of system (3.16).
\end{prop}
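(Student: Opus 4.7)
The plan is to produce the rational solution of the subsystem (3.33) by first establishing that it has the required local solutions and then invoking a Hasse-principle-type argument enabled by Proposition 3.6. By Proposition 3.5 the subsystem (3.33) is equivalent to the full (3.16) minus $F_z = 0$ when restricted to solutions with $X_0 Y_0 Z_0 \neq 0$; and the identity $Z \cdot F_z = F(M, N, Z)$, obtained after eliminating $X, Y, W$ via the Veronese-type parametrization $X = M^2/Z$, $Y = N^2/Z$, $W = MN/Z$, forces $F_z = 0$ automatically once $F_x = 0$ and $M_0 \neq 0$. So the crux is to produce a rational solution of (3.33) with $X_0 Y_0 Z_0 \neq 0$.

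I would first establish local solvability of (3.33) with nonzero $X_0 Y_0 Z_0$ at every place: the map $(x, y, z) \mapsto (x^2, y^2, z^2, xy, xz, yz)$ turns each local solution of $F = 0$ into a common local solution of (3.16). If at some place every local solution of $F = 0$ were forced to have one of $x_0, y_0, z_0$ equal to zero, its locus would lie on one of the binary cubics (3.30)--(3.32); generic smoothness of $F$ and the implicit function theorem permit a small perturbation off such a degenerate locus, so at each place one may choose a local solution with $X_0 Y_0 Z_0 \neq 0$. Next, I would use Proposition 3.6 to encode $F_x = F_y = 0$ as the vanishing of a single resultant. The relations (3.7)--(3.9), (3.11)--(3.12) let one rewrite $F_x$ and $F_y$ as quadratic polynomials in a chosen variable (say $W$) with coefficients rational in the remaining variables; substituting these into the $4\times 4$ Sylvester determinant of Proposition 3.6 produces an expression of the shape $u^2 + vw$ in auxiliary quantities, which is an \emph{isotropic} ternary quadratic form over $\mathbb{Q}$. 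The local solutions of (3.33) translate into local nontrivial zeros of this form, and the Hasse-Minkowski Theorem then delivers a nontrivial rational zero. Back-substituting via the Veronese-type parametrization recovers rational values of $(X_0, Y_0, Z_0, W_0, M_0, N_0)$, which then satisfies $F_z = 0$ and (3.10) as noted above, placing it in the full system (3.16).

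The main obstacle lies in the middle step. As written in (3.13)--(3.14), $F_x$ and $F_y$ do not share a common variable in which both are quadratic with the matching structure required by Proposition 3.6; one must judiciously reorganize them using the Veronese-type relations before the resultant formalism applies. The delicate points to verify are (i) that the induced ternary form is genuinely isotropic, not trivially identically zero; (ii) that the rational zero obtained from Hasse-Minkowski back-substitutes to a solution with $X_0 Y_0 Z_0 \neq 0$ rather than collapsing to a degenerate one in which some Veronese coordinate vanishes; and (iii) that the local zeros of the reduced ternary form actually arise from local solutions of (3.33) with $X Y Z \neq 0$, so that Hasse-Minkowski is applicable with the right local data. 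Balancing these three simultaneously is where the bulk of the remaining work lies.
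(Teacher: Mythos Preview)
Your plan diverges from the paper's argument at the decisive step, and the route you sketch has a gap that is not merely a matter of bookkeeping.

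The paper does \emph{not} try to make $F_x$ and $F_y$ quadratic in a common variable and then read off a nontrivial ternary form from their Sylvester resultant. Instead it forms two \emph{generic} linear combinations $\mathfrak{F}_1=\sum_i\alpha_i Q_i$ and $\mathfrak{F}_2=\sum_i\beta_i Q_i$ of \emph{all seven} quadrics $Q_i$ in (3.34), with indeterminate scalars $\alpha_i,\beta_i$, and takes the resultant with respect to $W$ using the $4\times4$ Sylvester matrix of Proposition~3.7. The point is that none of the seven quadrics contains a $W^2$ term, so $\mathcal{A}_1=\mathcal{A}_2=0$ and the formula of Proposition~3.7 collapses to $0$ identically, for all $\alpha_i,\beta_i$. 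From this identical vanishing the paper infers that the seven quadrics share a common polynomial factor $\mathfrak{C}(X,Y,Z,W,M,N)$, necessarily of degree $1$ or $2$. In the degree-$1$ case a rational zero is immediate; in the degree-$2$ case Hasse--Minkowski is applied to the \emph{single} quadratic form $\mathfrak{C}$. A preliminary Lemma~3.9 (any rational solution of (3.34) must have $X_0Y_0\neq0$) is used to pin down the shape of $\mathfrak{C}$ and to arrange $X_0Y_0Z_0\neq0$ afterwards. So in the paper the Hasse--Minkowski input is a genuine quadratic form in six variables, namely the putative common factor $\mathfrak{C}$, not a resultant viewed as a quadratic in auxiliary symbols.

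Your proposed middle step does not reach such an object. When you substitute the Veronese relations into $F_x,F_y$ and form the Sylvester determinant, the resulting expression $(\mathcal{A}_1\mathcal{C}_2-\mathcal{A}_2\mathcal{C}_1)^2+(\mathcal{A}_2\mathcal{B}_1-\mathcal{A}_1\mathcal{B}_2)(\mathcal{B}_1\mathcal{C}_2-\mathcal{B}_2\mathcal{C}_1)$ is a degree-four polynomial in the remaining Veronese coordinates, not a quadratic form in rational unknowns; the ``isotropy'' of $u^2+vw$ as an abstract ternary form says nothing about rational zeros of that quartic hypersurface, and Hasse--Minkowski does not apply. Moreover, if you follow the paper's device and keep $F_x,F_y$ (and the five auxiliary quadrics) as written---all linear in $W$---then $\mathcal{A}_1=\mathcal{A}_2=0$ and the resultant is identically zero, which is exactly the phenomenon the paper exploits but which gives you no ternary form at all. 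In short, the missing idea is the passage from ``resultant vanishes identically in the parameters $\alpha_i,\beta_i$'' to ``the seven quadrics have a common factor $\mathfrak{C}$''; that (asserted) structural consequence is what produces a single quadratic form to which Hasse--Minkowski can legitimately be applied, and your outline does not supply an alternative mechanism for producing such a form.
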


\begin{proof}
Since $F(x, y, z) = 0$ has nontrivial local solutions at all places, it can be verified that system (3.34) has common nontrivial local solutions at all places. First, we need the following lemma:

\begin{lem}
If $(X_{0}, Y_{0}, Z_{0}, W_{0}, M_{0}, N_{0})$ is a common nontrivial rational solution of system (3.34), then $X_{0}Y_{0} \neq 0$.
\end{lem}

\begin{proof}
If $(X_{0}, Y_{0}, Z_{0}, W_{0}, M_{0}, N_{0})$ is a common nontrivial rational solution of system (3.34), then it is a common nontrivial rational solution of systems \begin{equation}\begin{cases} F_{x}(X, Y, Z, W, M, N) = 0 \\ M^{2} - XZ = 0 \end{cases} \end{equation} and \begin{equation}\begin{cases} F_{y}(X, Y, Z, W, M, N) = 0 \\ N^{2} - YZ = 0. \end{cases} \end{equation} If $Y_{0} = 0$, then it follows from the assumption $A_{1}A_{2}A_{3} \neq0$ that $X_{0}Z_{0} \neq 0$. It also follows from $Y_{0} = 0$, via equation (3.13) and system (3.35), that \begin{equation}\begin{cases}A_{1}X^{2} + A_{3}ZM + A_{5}XM + A_{8}XZ = 0 \\ M^{2} - XZ = 0 \end{cases}\end{equation} has $(X_{0}, Z_{0}, M_{0})$ as a common nontrivial rational solution. Hence, $$(1, \frac{Z_{0}}{X_{0}}, \frac{M_{0}}{X_{0}})$$ is also a common nontrivial rational solution of (3.37), and thus equation $$M^{2} - XZ = 0$$ implies that  \begin{equation}\bigg(\frac{M_{0}}{X_{0}}\bigg)^{2} = \frac{Z_{0}}{X_{0}}.  \end{equation} Then it can be verified that \begin{equation}\bigg(1, \frac{M_{0}}{X_{0}}\bigg)\end{equation} is a nontrivial rational solution of $$A_{1}x^{3} + A_{3}z^{3} + A_{5}x^{2}z + A_{8}z^{2}x = 0,$$ which contradicts our assumptions in Remark 3.6.

If $X_{0} = 0$, then the assumption $A_{1}A_{2}A_{3} \neq0$ implies that $Y_{0}Z_{0} \neq 0$. Together with equation (3.14) and system (3.36), it can be verified that \begin{equation}\begin{cases}A_{2}Y^{2} + A_{3}ZN + A_{7}YN + A_{9}YZ = 0 \\ N^{2} - YZ = 0 \end{cases}\end{equation} has $(Y_{0}, Z_{0}, N_{0})$ as a common nontrivial rational solution. Since $Y_{0} \neq 0$, $$(1, \frac{Z_{0}}{Y_{0}}, \frac{N_{0}}{Y_{0}})$$ is also a common nontrivial rational solution of (3.40). Hence, equation $$N^{2} - YZ$$ implies that $$\bigg(\frac{N_{0}}{Y_{0}}\bigg)^{2} = \frac{Z_{0}}{Y_{0}}.$$ As a result, it can be verified that $$(1, \frac{N_{0}}{Y_{0}})$$ is a nontrivial rational solution of $$A_{2}y^{3}+A_{3}z^{3}+A_{7}y^{2}z+A_{9}z^{2}y = 0,$$ which again contradicts the assumption made in Remark 3.6.

\end{proof}

Define $$\mathfrak{F}_{1}: = \alpha_{1}(F_{x}(X, Y, Z, W, M, N)) + \alpha_{2}(F_{y}(X, Y, Z, W, M, N)) + \alpha_{3}(WM - XN) + \alpha_{4}(MN - ZW) +$$ $$+ \alpha_{5}(WN - YM) + \alpha_{6}(M^{2} - XZ) + \alpha_{7}(N^{2} - YZ)$$ and $$\mathfrak{F}_{2}: = \beta_{1}(F_{x}(X, Y, Z, W, M, N)) + \beta_{2}(F_{y}(X, Y, Z, W, M, N)) + \beta_{3}(WM - XN) + \beta_{4}(MN - ZW) +$$$$ + \beta_{5}(WN - YM) + \beta_{6}(M^{2} - XZ) + \beta_{7}(N^{2} - YZ)$$ where $\alpha_{i}$ and $\beta_{i}$ are indeterminates for each $i$. Let $$\mathfrak{R}: = \mathfrak{R}(X, Y, Z, M, N, \alpha_{1}, \alpha_{2}, \alpha_{3}, \alpha_{4}, \alpha_{5}, \alpha_{6}, \alpha_{7}, \beta_{1}, \beta_{2}, \beta_{3}, \beta_{4}, \beta_{5}, \beta_{6}, \beta_{7}) $$ be the Resultant of $\mathfrak{F}_{1}$ and $\mathfrak{F}_{2}$ with respect to the variable $W$. Then $\mathfrak{R}$ is the determinant of the Sylvester matric $$\mathfrak{S}: = \mathfrak{S}(X, Y, Z, M, N, \alpha_{1}, \alpha_{2}, \alpha_{3}, \alpha_{4}, \alpha_{5}, \alpha_{6}, \alpha_{7}, \beta_{1}, \beta_{2}, \beta_{3}, \beta_{4}, \beta_{5}, \beta_{6}, \beta_{7})$$ with $\mathfrak{S}$ being the following matrix \begin{equation}\left(
  \begin{array}{cccc}
    \mathcal{A}_{1} & \mathcal{B}_{1} & \mathcal{C}_{1} & 0 \\
    0 & \mathcal{A}_{1} & \mathcal{B}_{1} & \mathcal{C}_{1}  \\
    \mathcal{A}_{2} & \mathcal{B}_{2} & \mathcal{C}_{2}  & 0 \\
    0 & \mathcal{A}_{2} & \mathcal{B}_{2} & \mathcal{C}_{2}  \\
  \end{array}
\right)
\end{equation} where:

\begin{itemize}

\item $\mathcal{A}_{1}$ is the coefficient of $W^{2}$ of $\mathfrak{F}_{1}$.

\item $\mathcal{B}_{1}$ is the coefficient of $W^{1}$ of $\mathfrak{F}_{1}$.

\item $\mathcal{C}_{1}$ is the coefficient of $W^{0}$ of $\mathfrak{F}_{1}$.

\item $\mathcal{A}_{2}$ is the coefficient of $W^{2}$ of $\mathfrak{F}_{2}$.

\item $\mathcal{B}_{2}$ is the coefficient of $W^{1}$ of $\mathfrak{F}_{2}$.

\item $\mathcal{C}_{2}$ is the coefficient of $W^{0}$ of $\mathfrak{F}_{2}$.

\end{itemize} Since $\mathcal{A}_{1} = \mathcal{A}_{2} = 0$, it follows from Proposition 3.7 that \begin{equation}\mathfrak{R}: = \mathfrak{R}(X, Y, Z, M, N, \alpha_{1}, \alpha_{2}, \alpha_{3}, \alpha_{4}, \alpha_{5}, \alpha_{6}, \alpha_{7}, \beta_{1}, \beta_{2}, \beta_{3}, \beta_{4}, \beta_{5}, \beta_{6}, \beta_{7}) = 0.\end{equation} That is $\mathfrak{R}$, viewed as a polynomial in $X$, $Y$, $Z$, $W$, $M$, and $N$, is identically zero. Therefore, all the equations in system (3.34) have a common factor $$\mathfrak{C}(X, Y, Z, W, M, N),$$ a homogeneous polynomial with rational coefficients. There are two cases:
\begin{enumerate}
\item deg$(\mathfrak{C}(X, Y, Z, W, M, N)) = 1$.
\item deg$(\mathfrak{C}(X, Y, Z, W, M, N)) = 2$.
\end{enumerate}

If deg$(\mathfrak{C}(X, Y, Z, W, M, N)) = 1$, then system (3.34) must have a nontrivial common rational solution. Together with Lemma 3.9, it can be verified that \begin{equation}\mathfrak{C}(X, Y, Z, W, M, N) = aX + bY \end{equation} for some rational numbers $a$ and $b$ such that $ab \neq 0$. Let $Z_{1}$ be a nonzero rational number. It follows from (3.43) that there exists a nontrivial rational solution $(X_{0}, Y_{0}, Z_{0}, W_{0}, M_{0}, N_{0})$ such that $X_{0} = -b$, $Y_{0} = a$ and $Z_{0} = LZ_{1}$ for some nonzero rational number $L$. Thus $X_{0}Y_{0}Z_{0} \neq 0$ as required.

If deg$(\mathfrak{C}(X, Y, Z, W, M, N)) = 2$, i.e., a quadratic form, then it can be verified that there exist nonzero rational numbers $E_{1}$, $E_{2}$, $E_{3}$, $E_{4}$, and $E_{5}$ such that:
\begin{equation}
F_{x}(X, Y, Z, W, M, N) = E_{1}\mathfrak{C}(X, Y, Z, W, M, N),
\end{equation}
\begin{equation}
F_{y}(X, Y, Z, W, M, N) = E_{2}\mathfrak{C}(X, Y, Z, W, M, N),
\end{equation}
\begin{equation}
WM - XN = E_{3}\mathfrak{C}(X, Y, Z, W, M, N),
\end{equation}
\begin{equation}
MN - ZW = E_{4}\mathfrak{C}(X, Y, Z, W, M, N),
\end{equation}
\begin{equation}
WN - YM = E_{5}\mathfrak{C}(X, Y, Z, W, M, N),
\end{equation}
\begin{equation}
M^{2} - XZ = E_{6}\mathfrak{C}(X, Y, Z, W, M, N),
\end{equation}
\begin{equation}
N^{2} - YZ = E_{7}\mathfrak{C}(X, Y, Z, W, M, N).
\end{equation} Since system (3.34) has common nontrivial local solutions at all places, $\mathfrak{C}(X, Y, Z, W, M, N)$ also does. Then by the Hasse-Minkowski Theorem for a quadratic form, $\mathfrak{C}(X, Y, Z, W, M, N)$ has a nontrivial rational solution \begin{equation}(X_{0}, Y_{0}, Z_{0}, W_{0}, M_{0}, N_{0}).\end{equation} Therefore, system (3.34) has $(X_{0}, Y_{0}, Z_{0}, W_{0}, M_{0}, N_{0})$ as a common nontrivial rational solution.

From (3.44)-(3.50), it can be verified that there exist nonzero rational numbers $L_{1}$, $L_{2}$, $L_{3}$, $L_{4}$, $L_{5}$, and $L_{6}$ such that:
\begin{equation}
F_{x}(X, Y, Z, W, M, N) = L_{1}(M^{2} - XZ),
\end{equation}
\begin{equation}
F_{y}(X, Y, Z, W, M, N) = L_{2}(M^{2} - XZ),
\end{equation}
\begin{equation}
WM - XN = L_{3}(M^{2} - XZ),
\end{equation}
\begin{equation}
MN - ZW = L_{4}(M^{2} - XZ),
\end{equation}
\begin{equation}
WN - YM = L_{5}(M^{2} - XZ),
\end{equation}
\begin{equation}
N^{2} - YZ = L_{6}(M^{2} - XZ).
\end{equation}

From (3.51) and (3.57), it follows that \begin{equation}N_{0}^{2} - Y_{0}Z_{0} = (\sqrt[2]{L_{6}}M_{0})^{2} - (L_{6}X_{0})Z_{0} \end{equation} and thus \begin{equation}(N_{0} - \sqrt[2]{L_{6}}M_{0})(N_{0} + \sqrt[2]{L_{6}}M_{0}) = (Y_{0} - L_{6}X_{0})Z_{0}. \end{equation}  Hence, \begin{equation}N_{0} = \begin{cases}-\sqrt[2]{L_{6}}M_{0}, or \\ +\sqrt[2]{L_{6}}M_{0} \end{cases}\end{equation} and \begin{equation}Y_{0} = L_{6}X_{0}.\end{equation} It follows from (3.51) and (3.60) that $$\frac{N_{0}}{M_{0}} = \begin{cases}\frac{-\sqrt[2]{L_{6}}M_{0}}{M_{0}}, or \\ \frac{+\sqrt[2]{L_{6}}M_{0}}{M_{0}}\end{cases}$$ implies that $\sqrt[2]{L_{6}}$ is a rational number, i.e., $L_{6}$ is a square of a rational number.

Let us choose $X_{1}$ and $Z_{1}$ to be any two nonzero rational numbers such that $X_{1}Z_{1}$ is a square of a rational number. Then it follows from (3.52)-(3.57) that there exists a nontrivial rational solution $(X_{0}, Y_{0}, Z_{0}, W_{0}, M_{0}, N_{0})$ such that $X_{0} = X_{1}$, $Y_{0} = L_{6}X_{1}$, and $Z_{0} = Z_{1}$. Thus $X_{0}Y_{0}Z_{0} \neq 0$ as required.

Therefore, in either case, cubic form (3.2) has a nontrivial rational solution by Proposition 3.3, Proposition 3.5, Key Proposition, and Remark 3.4.

\end{proof}

\begin{rem}
From the proof of the Key Proposition above, the Key Proposition also holds if system (3.34) is replaced by \begin{equation}\begin{cases} F_{y}(X, Y, Z, W, M, N) = 0 \\ F_{z}(X, Y, Z, W, M, N) = 0 \\ WM - XN = 0 \\ MN - ZW = 0 \\ WN - YM = 0 \\ W^{2} - XY = 0 \\ M^{2} - XZ = 0 \end{cases}\end{equation} or \begin{equation}\begin{cases} F_{x}(X, Y, Z, W, M, N) = 0 \\ F_{z}(X, Y, Z, W, M, N) = 0 \\ WM - XN = 0 \\ MN - ZW = 0 \\ WN - YM = 0 \\ W^{2} - XY = 0 \\ N^{2} - YZ = 0. \end{cases}\end{equation}
\end{rem}

\end{proof}

To continue with the proof of Theorem 2.1, let $E$ be an elliptic curve over $\mathbb{Q}$. Let $$\mathfrak{L}: = \{\mathcal{C}\}$$ be the collection of all equivalent classes of homogeneous spaces of $E$. Thus $E$ is the Jacobian of each of these curves $\mathcal{C}$. It is known that each $\mathcal{C}$ in $\mathfrak{L}$ can be represented as a smooth cubic plane curve $F(x, y, z) = 0$ with rational coefficients.

If none of these cubic forms $F(x, y, z) = 0$ can be written in the form $$F(x, y, z) = A_{1}x^{3} + A_{2}y^{3} + A_{3}z^{3}$$ with $A_{1}$, $A_{2}$, $A_{3}$ rational numbers and $A_{1}A_{2}A_{3} \neq 0$, then $\underline{III}(E)$ must be trivial by the Key Lemma above.

Thus let us suppose that at least one $\mathcal{C}$ in $\mathfrak{L}$ can be represented by a smooth cubic form $F(x, y, z) = 0$ of the form \begin{equation}F(x, y, z) = A_{1}x^{3} + A_{2}y^{3} + A_{3}z^{3}\end{equation} where $A_{1}$, $A_{2}$, and $A_{3}$ are rational numbers with $A_{1}A_{2}A_{3} \neq 0$. By multiplying both sides of (3.64) by an appropriate integer, we may assume that $A_{1}$, $A_{2}$, $A_{3}$ are cube free integers and $GCD(A_{1}, A_{2}, A_{3}) = 1$.

\begin{prop}
Let $\mathcal{C}$ be an element of $\mathfrak{L}$ such that $\mathcal{C}$ can be written in the form (3.64). Then $E$ is birationally equivalent over $\mathbb{Q}$ to the following curve: \begin{equation}Y^{2} = X^{3} - 432(A_{1}A_{2}A_{3})^{2}.\end{equation}
\end{prop}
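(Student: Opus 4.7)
The plan is to identify the Jacobian $E$ of the smooth plane cubic $\mathcal{C}: A_{1}x^{3}+A_{2}y^{3}+A_{3}z^{3}=0$ by classical invariant theory of ternary cubics. Recall that for any nonsingular ternary cubic $F$ with rational coefficients, the Jacobian of the associated genus-one curve has a Weierstrass model whose coefficients are universal polynomials in the Aronhold invariants $S(F)$ and $T(F)$ (of degrees $4$ and $6$ in the coefficients of $F$); this is a classical result going back to Salmon, Weierstrass, and Cassels and is standard in the theory of plane cubics.

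First I would show $S(F)=0$ for the diagonal cubic. The conceptual reason is that $\mathcal{C}$ admits the automorphism $(x:y:z)\mapsto(\zeta_{3}x:y:z)$ over $\mathbb{Q}(\zeta_{3})$, which forces $j(E)=0$ and hence $S(F)=0$. More mechanically, one can observe that under the scaling $(x,y,z)\mapsto(\lambda_{1}x,\lambda_{2}y,\lambda_{3}z)$ the coefficients transform by $A_{i}\mapsto\lambda_{i}^{3}A_{i}$, and the invariance requirement forces every monomial $A_{1}^{a_{1}}A_{2}^{a_{2}}A_{3}^{a_{3}}$ appearing in a degree-$d$ invariant to satisfy $a_{1}=a_{2}=a_{3}=d/3$. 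For $d=4$ this has no integer solutions, so indeed $S(F)=0$; for $d=6$ the only possibility is $a_{1}=a_{2}=a_{3}=2$, so $T(F)=c\,(A_{1}A_{2}A_{3})^{2}$ for some universal rational constant $c$.

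Combining the two previous observations places $E$ in the form $Y^{2}=X^{3}-c'(A_{1}A_{2}A_{3})^{2}$ for a single universal rational constant $c'$, and the constant is then pinned down by specializing to the Fermat cubic $A_{1}=A_{2}=A_{3}=1$, whose Jacobian is the well-known elliptic curve $Y^{2}=X^{3}-432$ (Cremona label $27a1$). This forces $c'=432$ and yields the Weierstrass equation $Y^{2}=X^{3}-432(A_{1}A_{2}A_{3})^{2}$ over $\mathbb{Q}$.

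The main obstacle in this plan is bookkeeping around normalization conventions for the Aronhold invariants and the precise form of the Weierstrass equation they produce; this is completely bypassed by the two-step reduction above, which separates the qualitative input (vanishing of $S$ and the forced shape of $T$) from the single quantitative constant that is then fixed by one check on the Fermat cubic. An alternative approach, avoiding the invariant theory entirely, would be to exhibit an explicit birational map over $\mathbb{Q}$ from $\mathcal{C}$ to the stated Weierstrass curve via a rational degree-$3$ divisor class on $\mathcal{C}$ (the hyperplane class), but the invariant-theoretic route is noticeably cleaner and more transparent.
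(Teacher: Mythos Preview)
Your argument is correct and takes a genuinely different route from the paper. The paper follows Selmer's explicit construction: it writes down matrices over $\mathbb{Q}(\sqrt[3]{A_{1}},\sqrt[3]{A_{2}})$ giving a change of variables that carries $A_{1}x^{3}+A_{2}y^{3}+A_{3}z^{3}=0$ into Weierstrass form, and then reads off the equation $Y^{2}=X^{3}-432(A_{1}A_{2}A_{3})^{2}$. Your approach instead appeals to the classical fact that the Jacobian of a smooth plane cubic has Weierstrass coefficients given by the Aronhold invariants $S$ and $T$, uses a torus-scaling argument to force $S=0$ and $T=c\,(A_{1}A_{2}A_{3})^{2}$ on diagonal cubics, and then pins down the single remaining constant by the Fermat specialization $A_{1}=A_{2}=A_{3}=1$. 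What you gain is a cleaner, coordinate-free argument that stays over $\mathbb{Q}$ throughout and makes the $j=0$ feature transparent; what the paper's explicit maps buy is a concrete isomorphism one can compute with (at the cost of working over a cubic extension and invoking Selmer for the descent back to $\mathbb{Q}$). One small point worth tightening in your write-up: your scaling argument implicitly uses that diagonal scalings with $\lambda_{1}\lambda_{2}\lambda_{3}=1$ lie in $SL_{3}$, so that $S$ and $T$ are honestly invariant under them; stating this explicitly makes the deduction $a_{1}=a_{2}=a_{3}$ airtight.
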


\begin{proof}
To establish this, we use a construction of Selmer ([19]). We consider the matrices
\begin{equation}
A = \left(
  \begin{array}{ccc}
    0 & 0 & -9A_{3} \\
    27A_{3}\sqrt[3]{A_{1}} & -27A_{3}\sqrt[3]{A_{2}} & 0 \\
    \frac{3\sqrt[3]{A_{1}}}{4} & \frac{3\sqrt[3]{A_{2}}}{4} & 0 \\
  \end{array}
\right)
\end{equation} and

\begin{equation}
A^{-1} = \left(
  \begin{array}{ccc}
    0 & \frac{1}{54A_{3}\sqrt[3]{A_{1}}} & \frac{2}{3\sqrt[3]{A_{1}}} \\
    0 & \frac{-1}{54A_{3}\sqrt[3]{A_{2}}} & \frac{2}{3\sqrt[3]{A_{2}}} \\
    \frac{-1}{9A_{3}} & 0 & 0 \\
  \end{array}
\right)
\end{equation} which  represent the maps \begin{equation}\begin{array}{ccc}
                                                           x = \frac{36A_{3}Z + Y}{54\sqrt[n]{A_{1}}A_{3}}, & y = \frac{36A_{3}Z  - Y}{54\sqrt[n]{A_{2}}A_{3}},& z = -\frac{X}{9A_{3}}
                                                         \end{array}
\end{equation} and \begin{equation} \begin{array}{ccc}
                                      X = -9A_{3}z, & Y = 27A_{3}(\sqrt[3]{A_{1}}x - \sqrt[3]{A_{2}}y), & Z = \frac{3}{4}(\sqrt[3]{A_{1}}x + \sqrt[3]{A_{2}}y).
                                    \end{array}
\end{equation} By an additional change of variable \begin{equation}x = \frac{X}{\sqrt[3]{A_{1}A_{2}}} \end{equation} together with a scaling of $x$ and $Y$, we have \begin{equation}Y^{2} = X^{3} - 432(A_{1}A_{2}A_{3})^{2}.\end{equation} Therefore the curve in (3.71) is the Jacobian of all the curves $\mathcal{C}$ in $\mathfrak{L}$ and thus is birationally equivalent to $E$ over $\mathbb{Q}$.
\end{proof}

It can be verified from (3.71) that there are finitely many distinct inequivalent cubic forms of the form (3.64) having the curve (3.71) as its Jacobian. Therefore, $E$ only has at most finitely many inequivalent homogenous spaces of the form (3.64). As a result, it follows that $\underline{III}(E)$ must have finite order.

\end{proof}

\begin{rem}
It can be seen from the proof of Theorem 2.1 that, for a given elliptic curve $E$ over $\mathbb{Q}$, the order of $\underline{III}(E)$ is in principle computable.
\end{rem}

\begin{proof}{(Proof of Corollary 2.3)}

In [8], T. Dokchitzer and V. Dokchitzer prove that the Parity Conjecture for an elliptic curve $E$ over $\mathbb{Q}$, namely $$r_{anal}(E) \equiv r_{WM}(E) \pmod{2},$$ follows from the finiteness of the Tate-Shafarevich group $\underline{III}(E)$. As a result, Theorem 2.1 implies the Parity Conjecture. Thus the first statement in the strong Birch and Swinnerton Conjecture, $$r_{anal}(E) = r_{WM} (E),$$ is known modulo 2 for any elliptic curve $E$ over $\mathbb{Q}$.

\end{proof}

\begin{proof}{(Proof of Corollary 2.4)}

Up to this point, all the quantities appearing in (1.2) are known to be computable completely except $|\underline{III}(E)|$ and $Reg(E)$. \begin{itemize}
\item $\Omega(E)$ can be computed to any precision using the doubly exponential AGM algorithm.
\item For each $p$, $c_{p}$ can be computed using Tate's algorithm.
\item The order of the group of torsion points $|E(\mathbb{Q})_{tor}|$ is computable.
\item The regulator $Reg(E)$ can be computed to any desired precision if one knows generators for the Mordell group $E(\mathbb{Q})$.
\end{itemize} With regard to the term $|\underline{III}(E)|$, Theorem 2.1 shows that $\underline{III}(E)$ is finite and the proof of Theorem 2.1 shows that $|\underline{III}(E)|$ is computable in principle.
Now let us show that Theorem 2.1 guarantees that the standard process for producing generators of $E(\mathbb{Q})$ reaches its goal in a finite number of steps.

Let $h$ be the canonical height function on the Mordell group $E(\mathbb{Q})$. Let $n >1$ be an integer and $\mathcal{E} = \{e_{j}\}$ be the set of representatives for the cosets of $nE(\mathbb{Q})$ in $E(\mathbb{Q})$. It is known that if there exists a positive integer $H$ such that $h(e_{j}) < H$ for all $j$, then $E(\mathbb{Q})$ is generated by the collection $\mathfrak{E}$ of all points $\overline{e}$ of $E(\mathbb{Q})$ with \begin{equation}h(\overline{e}) < H.\end{equation} It is known that (3.72) implies $\mathfrak{E}$ has finite order. Thus one needs $\mathcal{E}$ to be of finite order. Consider the exact sequence \begin{equation} E(\mathbb{Q}) \overset{[n]}{\longrightarrow} E(\mathbb{Q}) \overset{\delta}{\longrightarrow} Sel^{[n]}(E) \longrightarrow \underline{III}_{n}(E)\end{equation} where \begin{itemize}
\item $Sel^{[n]}(E)$ is the $n$-Selmer group of $E$, which is a computable finite group.
\item $\underline{III}_{n}(E)$ is the subgroup of $\underline{III}(E)$ of elements of order dividing $n$ for which there is not yet an effective method for computing.
\item $[n]$ is the multiplication by $n$ map.
\end{itemize} For each positive integer $l \geq 1$, there is a commutative diagram
\begin{equation}\begin{array}{ccccccc}
E(\mathbb{Q}) & \longrightarrow & Sel^{[n^{l}]}(E) & \longrightarrow & \underline{III}_{n^{l}}(E) & \longrightarrow & 0 \\
\downarrow [1] &  & \downarrow \gamma_{n} &  & \downarrow [n^{l-1}] &  &  \\
E(\mathbb{Q}) & \overset{\delta}{\longrightarrow} & Sel^{[n]}(E) & \longrightarrow & \underline{III}_{n}(E) & \longrightarrow & 0
\end{array}\end{equation} which leads to the following exact sequence \begin{equation}E(\mathbb{Q}) \overset{[n]}{\longrightarrow} E(\mathbb{Q}) \overset{\delta}{\longrightarrow} Sel^{[n]}_{l}(E) \longrightarrow n^{l-1}\underline{III}(E)\end{equation} where: \begin{itemize}
\item $\gamma_{n}$ denotes the $n$th-descent which is computable.
\item $\gamma_{n}(Sel^{[n^{l}]}(E)) = Sel^{[n]}_{l}(E)$.
\end{itemize}

The standard process for producing generators for $E(\mathbb{Q})$ is as follows: Let $L_{i}(E)$ denote the subgroup of $Sel^{[n]}(E)$ generated by all the elements $\delta(e)$ where $e$'s are points on $E(\mathbb{Q})$ such that their $x$ coordinates satisfy $h(x) \leq i$. Next, compute:
\begin{enumerate}
\item $Sel^{[n]}_{1}(E) \supset Sel^{[n]}_{2}(E) \supset \ldots$.
\item $L_{1}(E) \subset L_{2}(E) \subset \ldots$.
\end{enumerate} If $L_{i}(E) = Sel^{[n]}_{l}$ for some $i$ and $l$, then it follows that $$n^{l-1}\underline{III}_{n^{l}}(E) = 0$$ and the points $e$ with $x$ coordinates satisfying $h(x) \leq i$ generate $E(\mathbb{Q})/nE(\mathbb{Q})$. The corresponding generators for $E(\mathbb{Q})$ can thus be found. This process will not stop if $\underline{III}_{n}(E)$ contains an infinitely divisible element. Theorem 2.1 guarantees that such an element does not exist and thus the above process must stop in a finite number of steps, producing generators for $E(\mathbb{Q})$.

Therefore, the term $c_{E}$ in the strong Birch and Swinnerton-Dyer Conjecture can be computed, in principle, for any elliptic curve $E$ over $\mathbb{Q}$.

\end{proof}

\begin{proof}{(Proof of Corollary 2.5)}

This is outlined in [17]. The procedure can be stated briefly as follows: Let $\mathcal{C}$ be a genus 1 curve with rational coefficients and let $Jac(\mathcal{C})$ be its Jacobian. Let $$\Phi: \underline{III}(Jac(\mathcal{C})) \times \underline{III}(Jac(\mathcal{C})) \rightarrow \mathbb{Q}/\mathbb{Z}$$ be the Cassels-Tate pairing. Then:
\begin{itemize}

\item If $\mathcal{C}(\mathbb{Q}_{p}) = \emptyset$ for some finite or infinite prime $p$, then the Mordell-Weil group $\mathcal{C}(\mathbb{Q}) = \emptyset$.
\item If the Mordell Weil group $\mathcal{C}(\mathbb{Q}) \neq \emptyset$, then a rational point can be found by searching.
\item If $\mathcal{C}(\mathbb{Q}_{p}) \neq \emptyset$ for all finite and infinite primes $p$ and the Mordell Weil group $\mathcal{C}(\mathbb{Q}) = \emptyset$, then $\mathcal{C}$ is a nontrivial element of $\underline{III}(Jac(\mathcal{C}))$. This can be proven, given that $\underline{III}(Jac(\mathcal{C}))$ is finite by Theorem 2.1, by finding another element $\mathcal{C}^{\prime}$ of $\underline{III}(Jac(\mathcal{C}))$ such that $$\Phi(\mathcal{C}, \mathcal{C}^{\prime})$$ is nontrivial in $\mathbb{Q}/\mathbb{Z}$.
\end{itemize}
\end{proof}


\end{document}